\newtheorem{theorem}{Theorem}
\newtheorem{corollary}{Corollary}
\begin{document}
  \title{Dirty derivative stability in the frequency domain}
  \author{Simon Kuang}
  \maketitle

  \begin{abstract}
    Differentiation filters can be made proper by composition with a first-order low-pass filter at a desired bandwidth, resulting in a ``dirty derivative.''
    A stable closed-loop system that relies on output derivatives remains stable when the derivatives are replaced with dirty derivatives of sufficiently high bandwidth.
    I prove and generalize this fact by a frequency-domain argument.
  \end{abstract}


  Suppose we have a SISO LTI system in controller normal form of degree \(n\):
  \begin{align}
    \dod{}{t} x_1 &= x_2 \notag\\
    \dod{}{t} x_2 = \dod[2]{}{t} x_1 &= x_3 \notag\\
    &\ldots \notag\\
    \dod[n]{}{t} x_1 &= q\del{\dod{}{t}} x_1 + u \notag\\
    \intertext{where \(q\) is some polynomial of degree \(n-1\).
    Then \(y = x_1\).
    Let \(Y\) and \(U\), respectively, be the Laplace transforms of \(y\) and \(u\) under homogeneous initial conditions.}
    s^n Y &= q(s) Y + U.\notag
    \intertext{Defining \(p(s) = s^n - q(s)\),}
    p(s) Y &= U.
    \intertext{We are given a polynomial \(k\) of degree \(n - 1\) representing full state feedback. Under the feedback law \(U = k(s) Y - V\), the transfer function }
    \frac{Y}{V}
    &= \frac{1}{p(s) - k(s)}
    \label{eq:known-stable-tf}
    \intertext{is stable, and we wonder whether \(Y/V\) remains stable when we replace \(k(s)\) with \(k(\delta)\) where
     }
    \delta &= \frac{\sigma s}{s +\sigma} \notag, \quad \sigma \in (0, \infty)
    \intertext{is the \emph{dirty derivative}.
    In other words, we are looking for a kind of continuity at \(\sigma = \infty\). To this end,
    projectivize the \(\sigma\) coordinate using \(\tau = \sigma^{-1}\).}
    \delta &= \frac{s}{\tau s + 1}
    \intertext{Now let us define \(G\), a deformation of \(Y/V\) where \(\tau = 0\) recovers \eqref{eq:known-stable-tf}.}
    G(s, \tau)
    &= \frac{1}{H(s, \tau)} \\
    H(s, \tau) &= p(s) - k(\delta(s, \tau))
    \end{align}
    Thus our question is the following: for sufficiently small \(\tau > 0\), is is true that \(H(s, \tau)\approx H(s, 0)\) in a meaningful way?
    Marchi, Fraile, and Tabuada~\cite{marchi2022dirty} find, by adapting the Lyapunov equation of the non-dirty controller, that stability is locally preserved.
    Our result is slightly more general.
      
    \begin{theorem}
      The zeros of \(H(s, \tau)\) are continuous in \(\tau\).
    \end{theorem}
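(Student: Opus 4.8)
The plan is to clear denominators, reducing the rational function \(H(s,\tau)\) to a polynomial in \(s\) whose coefficients are polynomials in \(\tau\), and then to invoke the continuity of the roots of a polynomial under continuous perturbation of its coefficients, proved in the frequency domain by the argument principle. Writing \(k(z)=\sum_{j=0}^{n-1}k_j z^j\) and \(\delta=s/(\tau s+1)\), I would multiply through by \((\tau s+1)^{n-1}\) to obtain
\begin{equation*}
  N(s,\tau) = (\tau s+1)^{n-1}p(s) - \sum_{j=0}^{n-1}k_j\, s^j (\tau s+1)^{n-1-j},
\end{equation*}
so that \(H(s,\tau) = N(s,\tau)/(\tau s+1)^{n-1}\), and each coefficient of \(N\), viewed as a polynomial in \(s\), is a polynomial in \(\tau\), hence continuous. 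First I would check that no spurious cancellation occurs: the pole of \(H\) at \(s=-1/\tau\) (present only for \(\tau>0\)) is not a root of \(N\), since evaluating at \(\tau s+1=0\) kills every term except \(j=n-1\), leaving \(-k_{n-1}(-1/\tau)^{n-1}\neq 0\) because \(\deg k=n-1\). Thus for each fixed \(\tau\) the zeros of \(H\) are exactly the roots of \(N\), and it suffices to track the latter.

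For continuity at an interior parameter \(\tau^\ast>0\), I would argue by the argument principle. Fix a root \(s^\ast\) of \(N(\cdot,\tau^\ast)\) of multiplicity \(m\) and a circle \(\Gamma\) around \(s^\ast\), small enough that \(s^\ast\) is the only enclosed root and \(N(\cdot,\tau^\ast)\) does not vanish on \(\Gamma\). Because \(N(\cdot,\tau)\to N(\cdot,\tau^\ast)\) uniformly on the compact set \(\Gamma\) as \(\tau\to\tau^\ast\), for \(\tau\) close enough to \(\tau^\ast\) no root lies on \(\Gamma\) and the integer \(\frac{1}{2\pi i}\oint_\Gamma N'/N\,ds\), which counts the enclosed roots, depends continuously on \(\tau\) and is therefore constant; so \(N(\cdot,\tau)\) has exactly \(m\) roots inside \(\Gamma\). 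Shrinking the radius yields continuity of the root multiset in the Hausdorff metric on \(\mathbb C\). Note that working with the polynomial \(N\) rather than the rational \(H\) keeps the integrand pole-free and sidesteps the moving pole at \(s=-1/\tau\).

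The one genuinely delicate point, and the part I expect to be the main obstacle, is the behavior at \(\tau=0\): the leading coefficient of \(N\) is \(\tau^{n-1}\), so the degree of \(N\) drops from \(2n-1\) for \(\tau>0\) to \(n\) at \(\tau=0\), and \(n-1\) roots escape to infinity as \(\tau\to 0^+\). The honest way to retain continuity across \(\tau=0\) is to track the roots in the Riemann sphere \(\widehat{\mathbb C}=\mathbb C\cup\{\infty\}\), equivalently to homogenize \(N\) in \(s\). Applying the same argument on a large fixed disk shows that exactly \(n\) roots of \(N(\cdot,\tau)\) converge to the \(n\) roots of \(N(\cdot,0)=p-k\); then, in a chart around infinity obtained by substituting \(s\mapsto 1/w\) and multiplying by \(w^{2n-1}\), one finds \(\tilde N(w,0)\) has a zero of order exactly \(n-1\) at \(w=0\), so Rouché gives precisely \(n-1\) roots converging to the point at infinity. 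This accounts for all \(2n-1\) roots and establishes continuity of the full zero multiset on \([0,\infty)\), with the escaping roots tracked rather than discarded; for the stability application one observes separately that these escaping roots cluster near \(s=-1/\tau\) and hence lie deep in the left half plane.
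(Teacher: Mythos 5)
Your proof is correct and follows the same strategy as the paper's: clear denominators to obtain a polynomial \(N(s,\tau)\) in \(s\) whose coefficients are polynomials in \(\tau\), then invoke continuity of polynomial roots with respect to the coefficients. The paper does this in two sentences; your version additionally proves the root-continuity lemma (argument principle on a small circle, uniform convergence of \(N\) on compacta) and, more importantly, confronts the degree drop at \(\tau=0\) --- the leading coefficient \(\tau^{n-1}\) vanishes, so \(n-1\) roots escape to infinity and must be tracked on the Riemann sphere --- a degeneracy that the paper's bare appeal to ``a polynomial's roots vary continuously in the coefficients'' silently glosses over, even though \(\tau=0\) is precisely the parameter value at which the corollary needs continuity.
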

    \begin{proof}
      The zeros of \(H(s, \tau)\) are the roots of the numerator of \(H(s, \tau)\), regarded as a polynomial in \(s\) whose coefficients are polynomials in \(\tau\).
      A polynomial's roots vary continuously in the coefficients.
    \end{proof}

    Therefore we have the following result, which can also be phrased in terms of \(\sigma = \tau^{-1}\).
    \begin{corollary}
      For all \(\epsilon > 0\) there exists \(\tau > 0\) such that the the poles of \(G(s, \tau)\) are within \(\epsilon\) of those of \(G(s, 0)\).
    \end{corollary}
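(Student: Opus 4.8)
My plan is to observe that the poles of $G(s,\tau) = 1/H(s,\tau)$ are precisely the zeros of $H(s,\tau)$, and then to translate the continuity supplied by the Theorem into the quantified $\epsilon$–$\tau$ statement demanded here. Clearing the denominator of the rational term $k(\delta)$, I would write $H(s,\tau) = N(s,\tau)/(\tau s + 1)^{n-1}$, where
\[
  N(s,\tau) = p(s)(\tau s + 1)^{n-1} - \sum_{j=0}^{n-1} k_j\, s^j (\tau s + 1)^{n-1-j}
\]
has coefficients that are polynomials—hence continuous—in $\tau$. The poles of $G$ are then the roots of $N(\cdot,\tau)$, and the Theorem asserts that these roots move continuously as $\tau$ varies.

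First I would fix the target: the poles of $G(s,0)$ are the $n$ roots of $N(s,0) = p(s) - k(s)$, a finite multiset $\{s_1,\dots,s_n\}$. Next I would invoke root-continuity at $\tau = 0$: for each $\epsilon > 0$ and each $i$ there is a $\tau_i > 0$ such that, for $0 < \tau < \tau_i$, some root of $N(\cdot,\tau)$ lies within $\epsilon$ of $s_i$. Taking $\tau = \min_i \tau_i$ over the finitely many target poles yields a single threshold that works simultaneously, which is exactly the corollary; the reformulation in $\sigma = \tau^{-1}$ is then immediate.

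The main obstacle is that the degree of $N(\cdot,\tau)$ in $s$ is $2n - 1$ for $\tau > 0$ but collapses to $n$ at $\tau = 0$: since $p$ is monic, the coefficient of $s^{2n-1}$ is $\tau^{n-1}$, which vanishes as $\tau \to 0$. Hence $n-1$ roots of $N(\cdot,\tau)$ escape to infinity, so the naive ``roots depend continuously on the coefficients'' must be read on the Riemann sphere $\mathbb{C} \cup \{\infty\}$ rather than on $\mathbb{C}$. I would therefore interpret ``within $\epsilon$'' one-sidedly—every pole of $G(s,0)$ is approximated by a pole of $G(s,\tau)$—so that the escaping roots are harmless, and I would take care that the matching in the previous paragraph pairs the $n$ bounded roots of $N(\cdot,\tau)$ with $s_1,\dots,s_n$ and not with the runaway roots. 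This degree drop is precisely the analytic shadow of the dirty derivative introducing $n-1$ fast, stable filter poles near $s = -\sigma$.
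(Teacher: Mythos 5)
Your argument follows the same route the paper takes: the corollary is read off from the theorem that the roots of the numerator polynomial of \(H(s,\tau)\) vary continuously with its coefficients, which are themselves polynomial in \(\tau\). Where you go beyond the paper is in flagging the degree collapse: for \(\tau>0\) the numerator \(N(\cdot,\tau)\) has degree \(2n-1\) in \(s\) with leading coefficient \(\tau^{n-1}\), so \(n-1\) roots escape to infinity as \(\tau\to 0^+\) (rescaling \(s=w/\tau\) shows they cluster near \(s=-1/\tau=-\sigma\)), and the classical continuity-of-roots theorem must therefore be applied on the Riemann sphere or restricted to the \(n\) bounded roots. The paper's one-line proof and the corollary's phrasing silently assume exactly the one-sided reading you propose---every pole of \(G(s,0)\) is approximated by a pole of \(G(s,\tau)\), not conversely---so your caveat is not pedantry but a correction needed to make the statement true as written. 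Your closing remark that the runaway poles sit deep in the left half-plane is also the right one: it is what rescues the stability conclusion that motivates the corollary, even though the corollary itself says nothing about them.
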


    \chapter{Root locus interpretation}
    We previously proved that the zeros of \(H(s, \tau)\) do not go very far as \(\tau\) is infinitesimally varied: in fact the same argument holds for the locus of solutions to \(H(s, \tau) = z\), where \(z \in \mathbb{C}\).
    Now we find where they go.
    
    In this section we characterize some curves \(s(\tau)\) along which \(H(s(\tau), \tau)\) is constant.
    The total derivative of \(H\) must vanish.
      \begin{align}
    \dif H(s, \tau)
    &= p'(s) \dif s
    - k'(\delta) \delta_s \dif s
    - k'(\delta) \delta_\tau \dif \tau \\
    &= p'(s) \dif s
    - \frac{k'(\delta)}{\del{\tau s + 1}^2} \dif s
    + \frac{k'(\delta)s^2} {\del{\tau s + 1}^2} \dif \tau
    \label{eq:total-derivative}
    \intertext{Setting \(\dif H=0\),}
    p'(s) \dif s
    - \frac{k'(\delta)}{\del{\tau s + 1}^2} \dif s
    &= -\frac{k'(\delta)s^2} {\del{\tau s + 1}^2} \dif \tau ,\\
    \intertext{or}
    \sbr{\del{\tau s + 1}^2 p'(s) - k'(\delta)} \dif s
    &= -k'(\delta) \dif \tau. 
  \end{align}
  
  \begin{theorem}
    \(H(s, \tau)\) is constant on integral curves of
    \begin{align}
    \dod{s}{\tau}
    &= -\frac{k'(\delta)}{\del{\tau s + 1}^2 p'(s) - k'(\delta)}
  \end{align}
  \end{theorem}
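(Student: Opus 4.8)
The plan is to treat the theorem as the passage from the infinitesimal relation $\dif H = 0$ to the integral statement ``constant on integral curves.'' Reading off \eqref{eq:total-derivative}, the coefficients of $\dif s$ and $\dif\tau$ are $H_s = p'(s) - k'(\delta)\delta_s$ and $H_\tau = -k'(\delta)\delta_\tau$, with $\delta_s = (\tau s + 1)^{-2}$ and $\delta_\tau = -s^2(\tau s+1)^{-2}$. Solving $\dif H = H_s\,\dif s + H_\tau\,\dif\tau = 0$ for $\dod{s}{\tau}$ gives the quotient $-H_\tau/H_s$, and clearing the common factor $(\tau s+1)^2$ from numerator and denominator recovers the rational vector field on the right-hand side of the theorem. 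So the ODE is, by construction, nothing but $\dif H = 0$ rewritten as $\dod{s}{\tau} = -H_\tau/H_s$; the only work is to run this implication backwards.

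To do that I fix a solution $s(\tau)$ of the ODE on an interval and set $h(\tau) = H\del{s(\tau),\tau}$. The chain rule gives $h'(\tau) = H_s\,\dod{s}{\tau} + H_\tau$, and substituting $\dod{s}{\tau} = -H_\tau/H_s$ collapses this to $h'(\tau) \equiv 0$. Hence $h$ is constant along the curve, which is exactly the claim that $H$ is constant on integral curves. No computation beyond \eqref{eq:total-derivative} is required: the velocity field was engineered to annihilate $\dif H$, and the chain rule merely confirms it.

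The point I expect to dwell on is the domain of validity. The right-hand side is undefined where $\tau s + 1 = 0$, that is $s = -\sigma$, because $\delta$ and hence $H$ blow up there, and also where the denominator $(\tau s + 1)^2 p'(s) - k'(\delta) = (\tau s+1)^2 H_s$ vanishes. Off these two loci the field is a ratio of polynomials in $s$ with polynomial-in-$\tau$ coefficients and nonvanishing denominator, so it is holomorphic and locally Lipschitz, and Picard--Lindel\"of supplies a unique integral curve through every regular point, along which the chain-rule argument holds verbatim. The zeros of $H_s$ are precisely the turning points of the level curves of $H$, where the curve is vertical in the $(\tau, s)$-plane and $s$ is no longer a graph over $\tau$; the honest way to include them is to parametrize each level curve symmetrically in $s$ and $\tau$ (for instance by arc length) so that the statement holds on the whole connected level set rather than only on $\tau$-graphs. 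This boundary-bookkeeping, not the differentiation, is the real content of a careful proof.
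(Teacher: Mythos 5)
Your strategy is the paper's own: the theorem is obtained by setting the total derivative \eqref{eq:total-derivative} to zero and solving for \(\dod{s}{\tau}\), and your chain-rule check that \(h(\tau) = H\del{s(\tau),\tau}\) satisfies \(h'\equiv 0\) along solutions of \(\dod{s}{\tau} = -H_\tau/H_s\) makes explicit what the paper leaves implicit. The attention to where the field is actually defined (the pole of \(\delta\) at \(\tau s + 1 = 0\), and the zeros of \(H_s\), i.e.\ the bifurcation points the paper alludes to) is a genuine improvement over the source.

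There is, however, one step that fails as written. From your own formulas, \(H_\tau = -k'(\delta)\delta_\tau = k'(\delta)s^2\del{\tau s+1}^{-2}\), so
\begin{align}
  -\frac{H_\tau}{H_s}
  &= -\frac{k'(\delta)\,s^2}{\del{\tau s+1}^2 p'(s) - k'(\delta)}, \notag
\end{align}
and clearing the common factor \(\del{\tau s+1}^2\) cannot remove the \(s^2\) in the numerator. This is \emph{not} the right-hand side of the theorem, which omits the \(s^2\); your claim that the quotient ``recovers the rational vector field on the right-hand side'' silently drops that factor. The mismatch originates in the source --- the \(s^2\) is present in \eqref{eq:total-derivative} and in the display that follows it, and disappears without justification in the next line --- but your proof inherits rather than resolves it. Along the field as \emph{stated}, one computes \(h' = H_s\dod{s}{\tau} + H_\tau = k'(\delta)\del{s^2 - 1}\del{\tau s+1}^{-2}\), which is not identically zero, so \(H\) is not in fact constant on those integral curves. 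Your argument correctly proves the corrected statement with \(-k'(\delta)s^2\) in the numerator; to be a proof of the theorem as printed it would have to either exhibit the missing cancellation (there is none) or flag the statement as erroneous.
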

  The denominator implies that for small \(\tau\), the locus \(s(\tau)\) has a bifurcation at stationary points of \(H(s, \tau)\) (regarded as a function of \(s\)).
  

  \chapter{Bode/Nyquist interpretation}
  We saw that not only is \(H(s, \tau)\) continuous in \(\tau\); it is also differentiable in \(\tau\).
  From \eqref{eq:total-derivative} we read that
  \begin{align}
    \dpd{H(s, \tau)}{\tau}
    &= \frac{k'(\delta(s)) s^2}{\del{\tau s + 1}^2},
    \label{eq:H_partial_tau}
    \intertext{and in particular, the magnitude changes as }
    \dpd{}{\tau}
    \log \left\|H(s, \tau)\right\|^2
    &=
    \frac{H\pd{\overline H}{\tau} + \overline{H} \pd{H}{\tau}}{\left\|H\right\|^2} \notag \\
    &=
    \frac{
      2 \Re \sbr{\overline{H} k'(\delta) s^2}
    }{
      \del{\tau s + 1}^2 \left\|H\right\|^2
    }.
  \end{align}
  Additionally, \eqref{eq:H_partial_tau} shows that the Nyquist plot, and therefore the stability characteristics, of \(H(s, \tau)\) deform continuously in \(\tau\).
  \begin{enumerate}
    \item When \(s \approx 0\), \(\pd{H}{\tau} \approx k'(0)s^2\).
    \item As \(s \to \infty\), \(\pd{H}{\tau} \to k'(1)\).
  \end{enumerate}

  \bibliographystyle{abbrvurl}
  \bibliography{refs}

\begin{thebibliography}{1}

\bibitem{marchi2022dirty}
M.~Marchi, L.~Fraile, and P.~Tabuada.
\newblock Dirty derivatives for output feedback stabilization, 2022.
\newblock \href {https://doi.org/10.48550/arXiv.2202.01941} {\path{doi:10.48550/arXiv.2202.01941}}.

\end{thebibliography}

\end{document}